\newcommand{\R}{\mathbb{R}}
\newcommand{\Q}{\mathbb{Q}}
\newcommand{\E}{\mathbb{E}}
\newcommand{\Pb}{\mathbb{P}}
\newcommand{\Qb}{\mathbb{Q}}
\newcommand{\cF}{\mathcal{F}}
\newcommand{\cG}{\mathcal{G}}
\newcommand{\cFM}{\mathcal{FM}}
\newcommand{\partiel}[1]{\frac{\partial}{\partial {#1}}}
\DeclareMathOperator{\Log}{Log}
\DeclareMathOperator{\Exp}{Exp}
\DeclareMathOperator{\Vol}{Vol}
\DeclareMathOperator{\Cut}{Cut}
\DeclarePairedDelimiterX{\norm}[1]{\lVert}{\rVert}{#1}
\newcommand{\reflemma}[1]{Lemma~\ref{#1}}
\newcommand{\refsec}[1]{Section~\ref{#1}}
\DeclareMathOperator{\Ad}{\mathrm{Ad}}
\newcommand{\SO}{\mathrm{SO}}
\newcommand{\Symp}{\mathrm{Sym^+}}
\begin{document}
\title{Bridge Simulation and Metric Estimation on Lie Groups}
%
%\titlerunning{Abbreviated paper title}
% If the paper title is too long for the running head, you can set
% an abbreviated paper title here
%
\author{Mathias H\o jgaard Jensen\inst{1} \and
Sarang Joshi\inst{2} \and Stefan Sommer\inst{1}}
\authorrunning{Jensen et al.}
% First names are abbreviated in the running head.
% If there are more than two authors, 'et al.' is used.
%
\institute{Department of Computer Science, University of Copenhagen\\
Universitetsparken 1, 2100, Copenhagen \O, Denmark\\
\email{\{matje,sommer\}@di.ku.dk} \and
Department of Biomedical Engineering, University of Utah\\
72 S Central Campus Drive, Salt Lake City, UT 84112, USA\\
\email{sjoshi@sci.utah.edu}}
\maketitle              % typeset the header of the contribution

\vspace{-.5cm}

\begin{abstract}
We present a simulation scheme for simulating Brownian bridges on complete and connected Lie groups. We show how this simulation scheme leads to absolute continuity of the Brownian bridge measure with respect to the guided process measure. This result generalizes the Euclidean result of Delyon and Hu to Lie groups. We present numerical results of the guided process in the Lie group $\SO(3)$. In particular, we apply importance sampling to estimate the metric on $\SO(3)$ using an iterative maximum likelihood method.

\keywords{Brownian Motion, Brownian Bridge Simulation, Importance sampling, Lie groups, Metric estimation}
\end{abstract}

\vspace{-1cm}

\section{Introduction}

\vspace{-3mm}

Bridge simulation techniques are known to play a fundamental role in statistical inference for diffusion processes. Diffusion bridges in manifolds have mainly been used to provide gradient and hessian estimates. To the best of our knowledge, this paper is the first to describe a simulation technique for diffusion bridges in the context of Lie groups.

%\subsubsection{Overview of Paper}
The paper is organized as follows. In \refsec{sec: notation and background}, we describe some background theory of Lie groups, Brownian motions, and Brownian bridges in Riemannian manifolds. \refsec{sec: simulation of bridges on Lie groups} presents the theory and results. \refsec{sec: importance sampling and metric estimation} shows in practice the simulation scheme in the Lie group $\SO(3)$. Using importance sampling, we obtain an estimate of the underlying unknown metric.

\vspace{-4mm}

\section{Notation and Background}\label{sec: notation and background}

\vspace{-3mm}
\subsubsection{Lie Groups}

Throughout, we let $G$ denote a connected Lie Group of dimension $d$, i.e., a smooth manifold with a group structure such that the group operations $G \times G \ni (x,y) \overset{\mu}{\mapsto} xy \in G$ and $G \ni x \overset{\iota}{\mapsto} x^{-1}\in G$ are smooth maps. If $x \in G$, the left-multiplication map, $L_x y$, defined by $y \mapsto \mu(x,y)$, is a diffeomorphism from $G$ to itself. Similarly, the right-multiplication map $R_x y$ defines a diffeomorphism from $G$ to itself by $y \mapsto \mu(y,x)$. 
We assume throughout that $G$ acts on itself by left-multiplication. Let $dL_x \colon TG \rightarrow TG$ denote the pushforward map given by $(dL_x)_y \colon T_yG \rightarrow T_{xy}G$. A vector field $V$ on $G$ is said to be left-invariant if $(dL_x)_yV(y) = V(xy)$. The space of left-invariant vector fields is linearly isomorphic to $T_eG$, the tangent space at the identity element $e \in G$. By equipping the tangent space $T_eG$ with the Lie bracket we can identify the Lie algebra $\cG$ with $T_eG$. The group structure of $G$ makes it possible to define an action of $G$ on its Lie algebra $\cG$. The conjugation map $C_x := L_x \circ R_x^{-1} \colon y \mapsto xyx^{-1}$, for $x \in G$, fixes the identity $e$. Its pushforward map at $e$, $(dC_x)_e$, is then a linear automorphism of $\cG$. Define $\Ad(x) := (dC_x)_e$, then $\Ad \colon x \mapsto \Ad(x)$ is the adjoint representation of $G$ in $\cG$. The map $G \times \cG \ni (x,v) \mapsto \Ad(x)v \in \cG$ is the adjoint action of $G$ on $\cG$. We denote by $\langle \cdot , \cdot \rangle$ a Riemannian metric on $G$. The metric is said to be left-invariant if $\langle u, v \rangle_y = \left\langle (dL_x)_y u, (dL_x)_y v \right\rangle_{L_x(y)}$, for every $u,v \in T_yG$, i.e., the left-multiplication maps are isometries, for every $x \in G$. In particular, we say that the metric is $\Ad(G)$-invariant if  $\langle u, v \rangle_e = \left\langle \Ad(x) u, \Ad(x) v \right\rangle_e$, for every $u,v \in \cG$. Note that an $\Ad(G)$-invariant inner on $\cG$ induces a bi-invariant (left- and right-invariant) metric on $G$.

\vspace{-5mm}

\subsubsection{Brownian Motion}

Endowing a smooth manifold $M$ with a Riemannian metric, $g$, allows us to define the Laplace-Beltrami operator, $\Delta_M f =$ div grad $f$. This operator is the generalization of the Euclidean Laplacian operator to manifolds. In terms of local coordinates $(x_1,\dots,x_d)$ the expression for the Laplace-Beltrami operator becomes
$\Delta_M f = \det(g)^{-1/2}\left(\frac{\partial}{\partial x_j} g^{ji} \det(g)^{1/2} \frac{\partial}{\partial x_i}\right)f$,
where $\det(g)$ denotes the determinant of the Riemannian metric $g$ and $g^{ij}$ are the coefficients of the inverse of $g$. An application of the product rule implies that $\Delta_M$ can be rewritten as
$
\Delta_Mf = a^{ij}\frac{\partial}{\partial x_i} \frac{\partial}{\partial x_j} f + b^j \frac{\partial}{\partial x_j} f,
$
where $a^{ij}=g^{ij}$, $b^k = - g^{ij}\Gamma^k_{ij}$, and $\Gamma$ denote the Christoffel symbols related to the Riemannian metric. This diffusion operator defines a Brownian motion on the $M$, valid up to its first exit time of the local coordinate chart. 

In the case of the Lie group $G$, the identification of the space of left-invariant vector fields with the Lie algebra $\cG$ allows for a global description of $\Delta_G$. Indeed, let $\{v_1,\dots v_d\}$ be an orthonormal basis of $T_eG$. Then $V_i(x) = (dL_x)_e v_i$ defines left-invariant vector fields on $G$ and the Laplace-Beltrami operator can be written as (cf. \cite[Proposition 2.5]{liao2004levy})
    $
    \Delta_G f(x) = \sum_{i=1}^d V_i^2f(x) - V_0 f(x),
   $
where $V_0 = \sum_{i,j=1}^d C_{ij}^j V_j$ and $C^k_{ij}$ denote the structure coefficients given by $[V_i,V_j] = C^k_{ij}V_k$. The corresponding stochastic differential equation (SDE) for the Brownian motion on $G$, in terms of left-invariant vector fields, then becomes
    \begin{equation}\label{eq: BM SDE on Lie group}
        dX_t = - \frac{1}{2}V_0(X_t) dt + V_i(X_t) \circ dB^i_t, \qquad X_0 = e,
    \end{equation}
where $\circ$ denotes integration in the Stratonovich sense. By \cite[Proposition 2.6]{liao2004levy}, if the inner product is $\Ad(G)$ invariant, then $V_0 = 0$. The solution of \eqref{eq: BM SDE on Lie group} is conservative or non-explosive and is called the left-Brownian motion on $G$ (see \cite{shigekawa1984transformations} and references therein).

\vspace{-5mm}
\subsubsection{Riemannian Brownian Bridges}

In this section, we briefly review some classical facts on Brownian bridges on Riemannian manifolds. As Lie groups themselves are manifolds, the theory carries over \textit{mutatis mutandis}. However, Lie groups' group structure allows the notion of left-invariant vector fields. The identification of the Lie algebra with the vector space of left-invariant vector fields makes Lie groups parallelizable. Thus, the frame bundle construction for developing stochastic processes on manifolds becomes superfluous since left-invariant vector fields ensure stochastic parallel displacement.

%\subsubsection{Brownian Bridges}
Let $\Pb^t_x$ be the measure of a Riemannian Brownian motion, $X_t$, at some time $t$ started at point $x$. Suppose $p$ denotes the transition density of the Riemannian Brownian motion. In that case, $d\Pb^t_x = p(t,x,y) d\Vol(y)$ describes the measure of the Riemannian Brownian motion, where $d\Vol(y)$ is the Riemannian volume measure. Conditioning the Riemannian Brownian motion to hit some point $v$ at time $T>0$ results in a Riemannian Brownian bridge. Here, $\Pb_{x,v}^T$ denotes the corresponding probability measure. The two measures are equivalent over the time interval $[0, T)$, however mutually singular at time $t=T$. The initial enlargement of the filtration remedies the singularity. The corresponding Radon-Nikodym derivative is given by 
	\begin{equation*}
		\frac{d\Pb_{x,v}^T}{d\Pb_x^T}\big|_{\cF_s} = \frac{p(T-s,X_s, v)}{p(T,x,v)} \qquad \text{for } 0 \leq s <T,
	\end{equation*}
which is a martingale for $s < T$. The Radon-Nikodym derivative defines the density for the change of measure and provides the basis for the description of Brownian bridges. In particular, it provides the conditional expectation defined by
	\begin{equation*}
		\E[F(X_t)|X_T = v] = \frac{\E[p(T-t,X_t,v)F(X_t)]}{p(T,x,v)},
	\end{equation*}
for any bounded and $\cF_s$-measurable random variable $F(X_s)$. As described in \cite{hsu2002stochastic}, the Brownian bridge yields an SDE in the frame bundle, $\cFM$, given by
	\begin{equation}\label{eq: BB sde on frame bundle}
	dU_t = H_i(U_t) \circ \left(dB^i_t + H_i \log \tilde{p}(T-t,U_t,v) dt \right), \qquad U_0 = u_0, 
	\end{equation}
in terms of the horizontal vector fields $(H_i)$, which is the lifted $M$-valued Brownian bridge, $X_t := \pi(U_t)$, where $\pi \colon \cFM \rightarrow M$.

\vspace{-5mm}

\section{Simulation of Bridges on Lie Groups}\label{sec: simulation of bridges on Lie groups}

\vspace{-4mm}

In this section, we consider the task of simulating \eqref{eq: BM SDE on Lie group} conditioned to hit $v \in G$, at time $T > 0$. The potentially intractable transition density for the solution of \eqref{eq: BM SDE on Lie group} inhibits simulation directly from \eqref{eq: BB sde on frame bundle}. Instead, we propose to add a guiding term mimicking that of Delyon and Hu \cite{delyon_simulation_2006}, i.e., the guiding term becomes the gradient of the distance to $v$ divided by the time to arrival. The SDE for the guided diffusion becomes
    \begin{equation}\label{eq: guided diffusion sde}
        dY_t = - \frac{1}{2}V_0(Y_t) dt + V_i(Y_t) \circ \left(dB^i_t - \frac{\left(\nabla_y d(Y_t,v)^2\right)^i}{2(T-t)}dt\right), \qquad Y_0 = e,
    \end{equation}
where $d$ denotes the Riemannian distance function. Note that we can always, for convenience, take the initial value to be the identity $e$. 

\vspace{-5mm}
\subsubsection{Radial Process}
We denote by $r_v(\cdot) := d(\cdot ,v)$ the radial process. Due to the radial process's singularities on $\Cut(v) \cup \{v\}$, the usual It\^o's formula only applies on subsets away from the cut-locus. The extension beyond the cut-locus of a Brownian motion's radial process was due to Kendall \cite{kendall1987radial}. Barden and Le \cite{barden1997some, le1995ito} generalized the result to $M$-semimartingales. The radial process of the Brownian motion \eqref{eq: BM SDE on Lie group} is given by
    \begin{equation}\label{eq: radial process for BM}
        r(X_t) = r(X_0)^2 + \int_0^t \left\langle \nabla r(X_s), V(X_s)dB_s\right\rangle + \frac{1}{2}\int_0^t \Delta_G r(X_s) ds - L_s(X),
    \end{equation}
where $L$ is the geometric local time of the cut-locus $\Cut(v)$, which is non-decreasing continuous random functional increasing only when $X$ is in $\Cut(v)$ (see \cite{barden1997some,kendall1987radial,le1995ito}). Let $W_t := \int_0^t \left\langle \partiel r,  V_i(X_s) \right\rangle dB^i_s$, which is the local-martingale part in the above equation. The quadratic variation of $W_t$ satisfies
    $
        d[W,W]_t 
        = 
     dt,
   $
by the orthonormality of $\{V_1,\dots, V_d\}$, thus $W_t$ is a Brownian motion by Levy's characterization theorem. From the stochastic integration by parts formula and \eqref{eq: radial process for BM} the squared radial process of $X$ satisfies 
    \begin{equation}\label{eq: squared radial process of BM}
         r(X_t)^2 = r(X_0)^2 + 2\int_0^t r(X_s) dW_s + \int_0^t r(X_s) \Delta_G r(X_s) ds - 2 \int_0^t r(X_s)dL_s,
    \end{equation}
where $dL_s$ is the random measure associated to $L_s(X)$.

Similarly, we obtain an expression for the squared radial process of $Y$. Using the shorthand notation $r_t:=r_v(Y_t)$ the radial process then becomes
    \begin{equation}\label{eq: squared radial process of guided diffusion}
         r_t^2 = r_0^2 + 2\int_0^t r_s dW_s  + \int_0^t \frac{1}{2}\Delta_G r_s^2 ds - \int_0^t \frac{r_s^2}{T-s} ds - 2\int_0^t r_s dL_s.
    \end{equation}
Imposing a growth condition on the radial process yields an $L^2$-bound on the radial process of the guided diffusion, \cite{thompson2018brownian}. So assume there exist constants $\nu \geq 1$ and $\lambda \in \R$ such that $\tfrac{1}{2} \Delta_G r_v^2 \leq \nu + \lambda r_v^2$ on $D\backslash \Cut(v)$, for every regular domain $D \subseteq G$. Then \eqref{eq: squared radial process of guided diffusion} satisfies
\begin{equation}\label{eq: mean squared radial bound} 
	\E [1_{t < \tau_D}r_v(Y_t)^2] \leq \left( r^2_v(e) + \nu t\left(\frac{t}{T-t}\right) \right)\left(\frac{T-t}{t}\right)^2 e^{\lambda t},
\end{equation}
where $\tau_D$ is the first exit time of $Y$ from the domain $D$.

\vspace{-5mm}
\subsubsection{Girsanov Change of Measure}

Let $B$ be the Brownian motion in $\R^d$ defined on the filtered probability space $(\Omega, \cF, (\cF_s),\Pb)$ and $X$ the solution of \eqref{eq: BM SDE on Lie group}. The process $\tfrac{\nabla r_v(X_t)^2}{2(T-t)}$ is an adapted process. As $X$ is non-explosive, we see that
    \begin{equation}\label{eq: novikov condition type bound on radial process}
    \int_0^t \left\lVert \frac{\nabla r(X_s)^2}{2(T-s)} \right\rVert^2 ds = \int_0^t \frac{r(X_s)^2 }{(T-s)^2}ds \leq C,
    \end{equation}
for every $0\leq t<T$, almost surely, and for some fixed constant $C > 0$. Define a new measure $\Qb$ by
    \begin{equation}\label{eq: radon-nikodym derivative}
       Z_t :=  \frac{d\Qb}{d\Pb}\bigg|_{\cF_t}(X)  = \exp\left\{-\int_0^t \left\langle  \frac{\nabla r(X_s)^2}{2(T-s)}, V(X_t)dB_s\right\rangle - \frac{1}{2} \int_0^t \frac{r(X_s)^2 }{(T-s)^2}ds \right\}.
    \end{equation}
From \eqref{eq: novikov condition type bound on radial process}, the process $Z_t$ is a martingale, for $t \in [0, T)$, and $\Qb_t$ defines a probability measure on each $\cF_t$ absolutely continuous with respect to $\Pb$. By Girsanov's theorem (see e.g. \cite[Theorem 8.1.2]{hsu2002stochastic}) we get a new process $b_s$ which is a Brownian motion under the probability measure $\Qb$. Moreover, under the probability $\Qb$, equation \eqref{eq: BM SDE on Lie group} becomes 
    \begin{equation}\label{eq: guided diffusion sde with radial unit vector}
        dY_t = - \frac{1}{2}V_0(Y_t)dt + V_i(Y_t) \circ \left(db^i_t - \frac{r(Y_t)}{T-t} \left(\partiel r\right)^i dt\right),
    \end{equation}
where $\left(\partiel r\right)^i$ is the $i$'th component of the unit radial vector field in the direction of $v$. The squared radial vector field is smooth away from $\Cut(v)$ and thus we set it to zero on $\Cut(v)$. Away from $\Cut(v)$, the squared radial vector field is $2\Log_v$, which is the inverse exponential at $v$. The added drift term acts as a guiding term, which pulls the process towards $v$ at time $T>0$. 

From \eqref{eq: radon-nikodym derivative}, we see that $\E[f(Y_t)] = \E[f(X_t)Z_t]$. Using \eqref{eq: squared radial process of BM} and the identity $\Delta_G r_v = \frac{d-1}{r_v} +  \partiel {r_v} \log \Theta_v$ (see \cite{thompson2015submanifold}), we equivalently write $\E[f(Y_t)\varphi_t] = \E[f(X_t)\psi_t]$, with
    \begin{equation}\label{eq: psi and phi function}
        \psi_t := \exp\left\{\frac{-r(X_t)^2}{2(T-t)}\right\} 
        \qquad \varphi_t := \exp\left\{ \int_0^t \frac{r_v(Y_s)^2}{T-s}\left(dA_s + dL_s\right)\right\},
    \end{equation}
where $dA_s = \partiel {r_v} \log \Theta_v$ is a random measure supported on $G \backslash \Cut(v)$ and $\Theta_v$ is the Jacobian determinant of $\Exp_v$.

\vspace{-5mm}

\subsubsection{Delyon and Hu in Lie Groups}

This section generalizes the result of Delyon and Hu \cite[Theorem 5]{delyon_simulation_2006} to the Lie group setting. The result can be modified to incorporate a generalization of \cite [Theorem 6]{delyon_simulation_2006}. 

    \begin{theorem}\label{result: delyon and hu result}
    Let $X$ be the solution of \eqref{eq: BM SDE on Lie group}. The SDE \eqref{eq: guided diffusion sde} yields a strong solution on $[0,T)$ and satisfies $\lim_{t\uparrow T} Y_t = v$ almost surely. Moreover, the conditional expectation of $X$ given $X_T=v$ is
    \begin{equation}
        \E[f(X)|X_T = v] = C \E\left[f(Y) \varphi_T\right],
    \end{equation}
    for every $\cF_t$-measurable non-negative function $f$ on $G$, $t<T$, where $\varphi_t$ is given in \eqref{eq: psi and phi function}.
    \end{theorem}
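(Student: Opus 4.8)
The plan is to follow Delyon and Hu, substituting the explicit Gaussian kernel by the short-time (Minakshisundaram--Pleijel/Varadhan) expansion of the heat kernel on $G$ and exploiting parallelizability so that everything is expressed through the radial process and the Jacobian factor $\Theta_v$. Three assertions must be established, in this order: a strong solution of \eqref{eq: guided diffusion sde} on $[0,T)$, the terminal condition $Y_t\to v$, and the change-of-measure identity.

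For existence, the coefficients of \eqref{eq: guided diffusion sde} are smooth on $G\setminus(\Cut(v)\cup\{v\})$, so localization on regular domains exhausting $G$ yields a unique strong solution up to the first hitting time of the singular set; since the guiding drift $-\nabla r_v^2/2(T-t)$ points inward, I would use \eqref{eq: mean squared radial bound} to rule out explosion and escape to $\Cut(v)$ before $T$. For the terminal condition I would send $t\uparrow T$ in \eqref{eq: mean squared radial bound}: its right-hand side decays like $T-t$, so $\E[r_v(Y_t)^2]\to0$, giving $L^2$-convergence; a Borel--Cantelli argument along $t_n=T-2^{-n}$ then yields $r_v(Y_{t_n})\to0$ almost surely, and a maximal inequality controlling the oscillation between consecutive times upgrades this to $\lim_{t\uparrow T}Y_t=v$ a.s.

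The core is the change of measure. I would combine the Girsanov density \eqref{eq: radon-nikodym derivative} with the bridge derivative $d\Pb_{x,v}^T/d\Pb_x^T|_{\cF_t}=p(T-t,X_t,v)/p(T,x,v)$ to obtain, by the chain rule, the density of the bridge law relative to the guided law, $p(T-t,Y_t,v)/(p(T,x,v)Z_t)$, so that $\E[f(X)|X_T=v]=\E[f(Y)\,p(T-t,Y_t,v)/(p(T,x,v)Z_t)]$ for every $t<T$. Applying It\^o's formula to $\log\psi_t=-r_v(X_t)^2/2(T-t)$ through \eqref{eq: squared radial process of BM} and inserting $\Delta_G r_v=(d-1)/r_v+\partiel{r_v}\log\Theta_v$, I would show that $Z_t$ factorizes as $\psi_t$ times the deterministic factor $(T/(T-t))^{d/2}e^{r_v(e)^2/2T}$ and a correction functional built from $\partiel{r_v}\log\Theta_v$ and the cut-locus local time; this is the identity $\E[f(Y_t)\varphi_t]=\E[f(X_t)\psi_t]$ recorded with \eqref{eq: psi and phi function}. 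Feeding the near-diagonal asymptotic $p(T-t,Y_t,v)\sim(2\pi(T-t))^{-d/2}\psi_t\,\Theta_v(Y_t)^{-1/2}$ into the density, the factors $\psi_t$ cancel and the singular powers $(T-t)^{-d/2}$ (from the kernel) and $(T-t)^{d/2}$ (from $1/Z_t$) cancel; letting $t\uparrow T$, so that $Y_t\to v$ and $\Theta_v(Y_t)\to\Theta_v(v)=1$, the density converges to $C^{-1}\varphi_T$ with $C=(2\pi T)^{-d/2}e^{-r_v(e)^2/2T}/p(T,x,v)$, which reduces to $C=1$ in the Euclidean case where $\varphi\equiv1$.

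The hard part is justifying the passage to the limit inside the expectation, that is, uniform integrability of $f(Y)\,p(T-t,Y_t,v)/Z_t$ on $[0,T)$. This needs (i) the Minakshisundaram--Pleijel expansion to hold uniformly in the parabolic regime where the time tends to $0$ and $Y_t\to v$ simultaneously with $r_v(Y_t)^2/(T-t)$ bounded, which forces control of the rate at which $Y_t\to v$ and of $Y_t$ staying off $\Cut(v)$ near time $T$, and (ii) integrability and convergence of the correction functional, including the local-time term $dL_s$. I expect (i)---uniform near-diagonal heat-kernel control together with non-accumulation of the guided process on the cut locus---to be the principal obstacle, and I would rely on the growth condition $\tfrac12\Delta_G r_v^2\le\nu+\lambda r_v^2$ and the moment bound \eqref{eq: mean squared radial bound} to supply the domination.
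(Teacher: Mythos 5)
Your first two steps (existence on $[0,T)$ and $\lim_{t\uparrow T}Y_t=v$) are sound and track the paper: the paper gets existence essentially from the Girsanov transformation and proves the terminal convergence from the moment bound \eqref{eq: mean squared radial bound} via Jensen and Fatou, where you use Borel--Cantelli plus a maximal inequality; both work from the same estimate. The problem is the third and central step, where your route genuinely diverges from the paper's and contains a gap you flag but do not close. You compose the bridge density $p(T-t,X_t,v)/p(T,x,v)$ with the Girsanov density $Z_t$ and then need two things: (i) almost-sure convergence of the ratio $M_t = p(T-t,Y_t,v)/(p(T,x,v)Z_t)$, which requires the near-diagonal asymptotic $p(T-t,Y_t,v)\sim(2\pi(T-t))^{-d/2}\psi_t\,\Theta_v(Y_t)^{-1/2}$ to hold \emph{along the random parabolic approach} $Y_t\to v$ (with $r_v(Y_t)^2/(T-t)$ merely bounded, and with no a priori guarantee of staying quantitatively clear of $\Cut(v)$); and (ii) uniform integrability of $M_t$ so the limit passes inside the expectation. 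Varadhan-type estimates give only logarithmic equivalence, not convergence of the ratio of $p$ to its Gaussian proxy, and the moment bound \eqref{eq: mean squared radial bound} controls $r_v(Y_t)^2$, not this ratio, so it cannot supply the domination you invoke. Nor does the martingale property of $M_t$ help: nonnegative martingales converge almost surely but need not converge in $L^1$ (exponential martingales are the standard counterexample), and attempting Scheff\'e here is circular because $\E^{\Qb}[\varphi_T]$ is exactly what determines the unknown constant $C$.

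It is worth seeing how the paper sidesteps precisely this obstacle. It never invokes pointwise heat-kernel asymptotics. Instead, \reflemma{result: lemma of conditional expectation} proves $\E[h\,\psi_t]/\E[\psi_t]\to\E[h\,|\,X_T=v]$ by writing both expectations as integrals of the (continuous) density $\Phi_h(t,z)$ against the concentrating factor $e^{-r_v(z)^2/2(T-t)}$ in a normal chart and rescaling $x=(T-t)^{1/2}y$ --- a Laplace approximation requiring only continuity and boundedness, with the cut locus discarded as a null set for the volume integral. The Girsanov identity $\E[f(Y_t)\varphi_t]=\E[f(X_t)\psi_t]$ then converts this ratio into $\E[h(Y)\varphi_t]/\E[\varphi_t]$, so the only uniform-integrability question that survives concerns $\varphi_t$ alone; that is \reflemma{result: lemma L1 convergence}, settled by citation to Thompson, and the constant comes out as $C=1/\E[\varphi_T]$. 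To repair your argument you would either have to prove the uniform near-diagonal expansion along the guided process (a substantial analytic input not available off the shelf near the cut locus), or restructure the limit as the paper does, moving the concentration argument onto the unconditioned law of $X$ where only the transition density's continuity is needed.
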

    
    \begin{proof}
        The result is a consequence of the change of measure together with \reflemma{result: almost sure convergence of guided bridge}, \reflemma{result: lemma of conditional expectation}, and \reflemma{result: lemma L1 convergence}.
    \end{proof}

    \begin{lemma}\label{result: almost sure convergence of guided bridge}
    The solution of SDE \eqref{eq: guided diffusion sde} satisfies $\lim_{t \rightarrow T} Y_t =v$ almost surely.
    \end{lemma}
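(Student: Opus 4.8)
The plan is to show that the guided diffusion $Y$ converges almost surely to $v$ as $t \uparrow T$ by controlling the squared radial process $r_t^2 = r_v(Y_t)^2$ via the estimate already established in the excerpt. The central object is equation \eqref{eq: squared radial process of guided diffusion}, together with the $L^2$-bound \eqref{eq: mean squared radial bound}, which under the stated growth condition $\tfrac{1}{2}\Delta_G r_v^2 \leq \nu + \lambda r_v^2$ gives
\begin{equation*}
    \E[1_{t<\tau_D}\, r_v(Y_t)^2] \leq \left(r_v^2(e) + \nu t\left(\tfrac{t}{T-t}\right)\right)\left(\tfrac{T-t}{t}\right)^2 e^{\lambda t}.
\end{equation*}
As $t \uparrow T$ the factor $\left(\tfrac{T-t}{t}\right)^2$ forces the right-hand side to $0$ (after first letting the regular domain $D$ exhaust $G$, so that $\tau_D \uparrow \infty$ since $Y$ is non-explosive, and applying monotone/Fatou arguments to remove the indicator). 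This yields $\E[r_v(Y_t)^2] \to 0$, hence convergence of $r_t \to 0$ in $L^2$ and therefore along a subsequence almost surely.

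First I would upgrade this $L^2$-convergence along a subsequence to genuine almost sure convergence of the whole family as $t \uparrow T$. The natural tool is a supermartingale argument: I would examine the process $r_t^2/(T-t)$ or a comparable normalization and show, using the drift term $-\int_0^t \tfrac{r_s^2}{T-s}\,ds$ appearing in \eqref{eq: squared radial process of guided diffusion}, that after an appropriate transformation the radial process becomes a nonnegative supermartingale near $t=T$. The strongly attracting drift $-\tfrac{r_s^2}{T-s}$ dominates the diffusion and local-time contributions as $s \to T$, so one expects $r_t^2$ to be driven to zero. A nonnegative supermartingale converges almost surely by Doob's theorem, and combined with the $L^2$ limit of $0$ this pins the almost sure limit at $0$, giving $\lim_{t\uparrow T} r_v(Y_t) = 0$, i.e.\ $Y_t \to v$ almost surely.

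The main obstacle will be handling the cut-locus contributions rigorously, namely the geometric local time terms $dL_s$ and the random measure $dA_s$ entering through $\Theta_v$. Since $r_v$ is only smooth away from $\Cut(v)\cup\{v\}$, the It\^o formula used to derive \eqref{eq: squared radial process of guided diffusion} requires the Kendall--Barden--Le extension, and the local time $L_s$ appears with a favorable sign (it is non-decreasing and enters as $-2\int_0^t r_s\,dL_s \leq 0$), which actually aids the supermartingale estimate rather than obstructing it. The delicate point is ensuring that the growth condition on $\tfrac{1}{2}\Delta_G r_v^2$ genuinely holds on a complete connected Lie group and that the exhaustion by regular domains $D$ is compatible with the localization $\tau_D$; I would verify that non-explosiveness of the left-Brownian motion (hence of $Y$, via the Girsanov change of measure over $[0,T)$) guarantees $\tau_D \uparrow \infty$ almost surely. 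Once the indicator $1_{t<\tau_D}$ is removed in the limit, the estimate \eqref{eq: mean squared radial bound} closes the argument.
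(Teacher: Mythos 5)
Your first paragraph reproduces the paper's own argument: exhaust $G$ by regular domains, use monotone convergence to remove the indicator $1_{\{t<\tau_D\}}$ in \eqref{eq: mean squared radial bound}, and observe that the right-hand side vanishes as $t \uparrow T$. The paper then finishes with Jensen's inequality ($\E[r_v(Y_t)] \to 0$) and Fatou's lemma. Strictly speaking, Fatou only delivers $\liminf_{t \uparrow T} r_v(Y_t) = 0$ almost surely, so your instinct that an extra step is needed to upgrade subsequential (or in-probability) convergence to convergence of the whole family is a legitimate one; it addresses a point the paper's own proof passes over silently.

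However, the specific process you propose for that upgrade fails. The process $Z_t = r_v(Y_t)^2/(T-t)$ is not a nonnegative supermartingale near $T$ and does not converge: differentiating $1/(T-t)$ produces $+\tfrac{r_t^2}{(T-t)^2}\,dt$, which exactly cancels the guiding drift $-\tfrac{r_t^2}{(T-t)^2}\,dt$ in \eqref{eq: squared radial process of guided diffusion}, leaving
\begin{equation*}
dZ_t = \frac{2r_t}{T-t}\,dW_t + \frac{\tfrac12\Delta_G r_t^2}{T-t}\,dt - \frac{2r_t}{T-t}\,dL_t .
\end{equation*}
Near $v$ one has $\tfrac12\Delta_G r_v^2 \approx d > 0$ (this is precisely why $\nu \geq 1$ in the growth condition), so the drift carries a positive, nonintegrable term of order $\nu/(T-t)$ exactly in the regime where $r_t$ is small; the "strongly attracting" quadratic drift cannot dominate it. Concretely, in the flat case the guided process is the standard Euclidean Brownian bridge, and the law of the iterated logarithm gives $\limsup_{t\uparrow T} \lVert Y_t - v\rVert^2/(T-t) = +\infty$ almost surely, so no supermartingale (hence a.s. convergence) statement can hold for this normalization. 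The repair is a subcritical normalization: for $\alpha \in (0,1)$, the process $\tilde{Z}_t = r_t^2/(T-t)^{\alpha} + \tfrac{\nu}{1-\alpha}(T-t)^{1-\alpha}$ has drift bounded above by $\bigl(\lambda - \tfrac{1-\alpha}{T-t}\bigr)\, r_t^2\,(T-t)^{-\alpha}\,dt \leq 0$ for $t$ sufficiently close to $T$, with the local-time term contributing favorably, so it is a nonnegative local, hence true, supermartingale there. Doob's convergence theorem then gives an a.s. limit, and since \eqref{eq: mean squared radial bound} yields $\E[r_t^2] = O(T-t)$, hence $\E[r_t^2/(T-t)^{\alpha}] \to 0$, Fatou pins that limit at $0$. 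This proves the lemma and in fact the stronger statement $r_v(Y_t)^2 = o\bigl((T-t)^{\alpha}\bigr)$ a.s. for every $\alpha < 1$. With that substitution your outline is sound and is genuinely more informative than the paper's Jensen--Fatou conclusion; as written, though, the key step would fail.
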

    
    \begin{proof}
Let $\{D_n\}_{n=1}^{\infty}$ be an exhaustion of $G$, that is, the sequence consists of open, relatively compact subsets of $M$ such that $\bar{D}_n \subseteq D_{n+1}$ and $G = \bigcup_{n=1}^{\infty} D_n$. Furthermore, let $\tau_{D_n}$ denote the first exit time of $Y$ from $D_n$, then from \eqref{eq: mean squared radial bound} we have that the sequence 
$
	\bigl(\E [1_{\{t < \tau_{D_n}\}} r_v^2(Y_t)]\bigr)_{n=1}				^{\infty}
$
is non-decreasing and bounded, hence from the monotone convergence theorem, it has a limit which is bounded by the right-hand side of \eqref{eq: mean squared radial bound}. Applying Jensen's inequality to the left-hand side of \eqref{eq: mean squared radial bound} 
\begin{equation*}
	\E [ r_v(Y_t)] \leq \left( r^2_v(e) + \nu t\left(\frac{t}{T-t}\right) \right)^{\frac{1}{2}}\left(\frac{T-t}{t}\right) e^{\frac{\lambda t}{2}}.
\end{equation*}
Since obviously $\E [ r_v(Y_T)] = r_v(Y_T)\Qb(r_v(Y_T) \neq 0)$, by Fatou's lemma
    $
        \E[r_v(Y_T)] \leq \liminf_{t\rightarrow T} \E[r(Y_t)] = 0,
    $
we conclude that $r(Y_t) \rightarrow 0$, $\Q$-almost surely.    
    \end{proof}

    \begin{lemma}\label{result: lemma of conditional expectation}
    Let $0 < t_1 <t_2< \dots < t_N <T$ and $h$ be a continuous bounded function function on $G^N$. With $\psi_t$ as in \eqref{eq: psi and phi function}, then
    \begin{equation}
        \lim_{t \rightarrow T} \frac{\E\left[h\left(X_{t_1},X_{t_2},\dots, X_{t_N}\right)\psi_t\right]}{\E[\psi_t]} = \E\left[h\left(X_{t_1},X_{t_2},\dots, X_{t_N}\right)|X_T =v\right].
    \end{equation}
    \end{lemma}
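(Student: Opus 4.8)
The plan is to exploit the fact that $\psi_t = \exp\{-r(X_t)^2/(2(T-t))\}$ is, up to normalization, a Gaussian kernel in the Riemannian distance to $v$ that concentrates at $v$ as $t \uparrow T$. Writing $\epsilon = T-t$, the factor $\exp\{-d(\cdot,v)^2/(2\epsilon)\}$ behaves like an approximate identity peaked at $v$, so integrating it against a continuous density should reproduce the value of that density at $v$ times the Gaussian normalizing constant $(2\pi\epsilon)^{d/2}$. Since the same constant appears in both the numerator and the denominator, it cancels in the ratio, leaving exactly the transition-density expression for the bridge conditional expectation recorded in \refsec{sec: notation and background}.

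Concretely, I would proceed as follows. First, for $t > t_N$ I use the Markov property to condition on $\cF_{t_N}$, writing $\E[h(X_{t_1},\dots,X_{t_N})\psi_t] = \E[h(X_{t_1},\dots,X_{t_N})\,G_\epsilon(X_{t_N})]$, where $G_\epsilon(z) := \E[\psi_t \mid X_{t_N}=z] = \int_G \exp\{-d(y,v)^2/(2\epsilon)\}\,p(t-t_N,z,y)\,d\Vol(y)$. Second, I analyze the small-$\epsilon$ behavior of $G_\epsilon(z)$ by Laplace's method: passing to normal coordinates $u$ centered at $v$, one has $d(\Exp_v u,v)=|u|$ and $d\Vol = \Theta_v(u)\,du$ with $\Theta_v(0)=1$, so the integral localizes near $u=0$ and yields $G_\epsilon(z) = (2\pi\epsilon)^{d/2}\big(p(T-t_N,z,v) + o(1)\big)$ as $\epsilon \to 0$. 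Third, taking $h\equiv 1$ and using Chapman--Kolmogorov gives $\E[\psi_t] = \E[G_\epsilon(X_{t_N})] = (2\pi\epsilon)^{d/2}\big(\int_G p(t_N,e,z)\,p(T-t_N,z,v)\,d\Vol(z) + o(1)\big) = (2\pi\epsilon)^{d/2}\big(p(T,e,v) + o(1)\big)$. Finally, forming the ratio cancels $(2\pi\epsilon)^{d/2}$ and produces $\E[h\,p(T-t_N,X_{t_N},v)]/p(T,e,v)$, which is precisely $\E[h(X_{t_1},\dots,X_{t_N}) \mid X_T=v]$ by the finite-dimensional law of the Brownian bridge.

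The main obstacle is making the Laplace asymptotics rigorous uniformly enough to pass the limit through the outer expectation over $X_{t_N}$. Two points need care. (i) The integrand is only smooth away from $\Cut(v)$, but since the cut locus lies at a positive distance $\delta$ from $v$, the Gaussian factor bounds the contribution of $G \setminus B_\delta(v)$ by $\exp\{-\delta^2/(2\epsilon)\}$ times a heat-kernel mass, which is negligible; hence the singularity of $d(\cdot,v)^2$ on $\Cut(v)$ does not affect the leading term. (ii) I must dominate $\epsilon^{-d/2}G_\epsilon(X_{t_N})$ uniformly in $\epsilon$ in order to invoke dominated convergence when taking the outer expectation; the continuity and Gaussian-type upper bounds on the heat kernel near $v$ supply the required integrable majorant, noting that the limiting integrand $p(T-t_N,X_{t_N},v)$ is itself integrable since $\E[p(T-t_N,X_{t_N},v)] = p(T,e,v) < \infty$. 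Once this uniform integrability is in place, exchanging limit and expectation is routine and the ratio converges as claimed.
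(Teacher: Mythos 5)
Your proposal is correct and follows essentially the same route as the paper's proof: both rest on Gaussian (Laplace-type) concentration of $\psi_t = e^{-r_v^2/(2(T-t))}$ at $v$ in normal coordinates, cancellation of the $(T-t)^{d/2}$ normalization between numerator and denominator, and identification of the limit via Bayes' formula. The only difference is organizational---the paper integrates out the times $t_1,\dots,t_N$ into a joint density $\Phi_h(t,z)$ and performs a single change of variables $x=(T-t)^{1/2}y$ in the integral over the time-$t$ position, whereas you condition at $t_N$ by the Markov property and pass the pointwise asymptotics of $G_\epsilon(X_{t_N})$ through the outer expectation by domination; your explicit treatment of the cut locus and of the limit interchange covers technical points the paper's proof leaves implicit.
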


    \begin{proof}
    The proof is similar to that of \cite[Lemma 7]{delyon_simulation_2006}. Let $(U,\phi)$ be a normal chart centered at $v \in G$. First, since the cut locus of any complete connected manifold has (volume) measure zero, we can integrate indifferently in any normal chart. For any $t \in (t_N,T)$ we have
	\begin{equation}
		\E[h(x_{t_1},...,x_{t_N})\psi_t] = \int_G \Phi_h(t,z)e^{-\frac{r_v(z)^2}{2(T-t)}}d\Vol(z)
	\end{equation}
where $d\Vol(z)= \sqrt{\det(A(z))}dz$ denotes the volume measure on $G$, $dz$ the Lebesgue measure, and $A$ the metric tensor. Moreover,
	\begin{equation*}
		\Phi_h(t,z) =
		 \int_{G^N}	h(z_1,...,z_N)p(t_1,u,z_1)\dots p(t-t_N,z_N,z)d			\Vol(z_1) \dots d\Vol(z_N),
	\end{equation*}
and of course $\Phi_1(t,z) = p(t,e,z)$. 
Using the normal chart and applying the change of variable $x=(T-t)^{1/2}y$ we get 
\begin{equation*}
	(T-t)^{-\frac{d}{2}} \E[h(x_{t_1},...,x_{t_N})\psi_t]
	\overset{t \rightarrow T}{\rightarrow}  \Phi_h(T,v) \det(A(v))^{\tfrac{d}{2}}\int_{\phi(G)}e^{-\frac{r_v(\phi^{-1}(y))^2}{2}}dy.
\end{equation*}
The conclusion follows from Bayes' formula.
    \end{proof}

\vspace{-6mm}

	\begin{lemma}\label{result: lemma L1 convergence}
	With $\varphi_t$ as defined above then $\varphi_t \overset{L_1}{\rightarrow} \varphi_T$.
	\end{lemma}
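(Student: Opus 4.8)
The plan is to combine almost-sure convergence with uniform integrability: since $L^1$-convergence is equivalent to convergence in probability together with uniform integrability of the family $\{\varphi_t\}_{t<T}$, and almost-sure convergence implies convergence in probability, it suffices to establish these two ingredients separately.

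First I would prove $\varphi_t\to\varphi_T$ almost surely, which amounts to showing that the exponent $\int_0^t \tfrac{r_v(Y_s)^2}{T-s}\,(dA_s+dL_s)$ converges as $t\uparrow T$. I split it into the local-time part and the absolutely continuous part $dA_s=\partiel{r_v}\log\Theta_v\,ds$. For the local-time part, \reflemma{result: almost sure convergence of guided bridge} gives $Y_t\to v$ almost surely, and since a point never lies in its own cut locus there is a neighbourhood of $v$ disjoint from $\Cut(v)$; hence there is a.s. a random time $S<T$ after which $Y$ stays in that neighbourhood, so $dL_s\equiv 0$ on $[S,T)$ and $\int_0^t\tfrac{r_v^2}{T-s}\,dL_s$ is eventually constant. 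For the $dA_s$ part, the near-diagonal expansion $\log\Theta_v=O(r_v^2)$ gives $|\partiel{r_v}\log\Theta_v|\le C\,r_v$ near $v$, so the integrand is dominated by $C\,r_v^3/(T-s)\le C\rho\,r_v^2/(T-s)$ once $r_v\le\rho$; integrating the bound \eqref{eq: mean squared radial bound} (loose near $s=0$ but finite there by continuity, and of order $\nu$ near $s=T$) yields $\E\int_0^T \tfrac{r_v(Y_s)^2}{T-s}\,ds<\infty$, hence $\int_0^T \tfrac{r_v^2}{T-s}\,ds<\infty$ almost surely and the $dA_s$-integral converges absolutely.

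The harder ingredient is uniform integrability. Here I would aim for a uniform moment bound $\sup_{t<T}\E[\varphi_t^{1+\varepsilon}]<\infty$ for some $\varepsilon>0$, equivalently control of $\E\exp\{(1+\varepsilon)\int_0^t\tfrac{r_v^2}{T-s}(dA_s+dL_s)\}$. The sign of the geometric term is the key simplification: on spaces of non-negative Ricci curvature---in particular on $\SO(3)$, the example treated below---one has $\partiel{r_v}\log\Theta_v\le 0$, so the $dA_s$-contribution is non-positive and can be discarded in the upper bound, leaving only the eventually-constant local-time term to be dominated; in general one splits $dA_s$ into its positive and negative parts and controls the positive part by the same $O(r_v)$ estimate. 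Combined with the a.s. convergence from the previous paragraph, the resulting uniform integrability upgrades the convergence to $L^1$. Alternatively, since $\varphi_t\ge 0$ and $\varphi_t\to\varphi_T$ a.s., it is enough by Scheff\'e's lemma to prove $\E[\varphi_t]\to\E[\varphi_T]$, trading the moment bound for convergence of the first moments.

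The main obstacle is precisely this uniform-integrability/exponential-moment step. The integrand $r_v(Y_s)^2/(T-s)$ is of order one as $s\uparrow T$, so the functional is genuinely singular at the endpoint and the finiteness of the first moment of the exponent does not by itself bound the exponential moment; one must rule out a non-negligible contribution from atypical paths whose radial process stays large near $T$. This is exactly where the sharp $L^2$ radial estimate \eqref{eq: mean squared radial bound} and the quadratic decay $\log\Theta_v=O(r_v^2)$ are indispensable, while the cut-locus term is handled softly because the bridge localizes near $v$, away from $\Cut(v)$, before time $T$.
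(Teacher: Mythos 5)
Your overall skeleton coincides with the paper's: almost-sure convergence of $\varphi_t$ plus uniform integrability of $\{\varphi_t \colon t \in [0,T)\}$ yields the $L^1$-convergence. Your first paragraph is in fact more detailed than the paper's proof, which simply invokes \reflemma{result: almost sure convergence of guided bridge}; your localization argument (the local time $dL_s$ vanishes eventually because $v \notin \Cut(v)$ and $Y_t \to v$, and the $dA_s$-integral converges absolutely via $\partiel{r_v}\log\Theta_v = O(r_v)$ together with \eqref{eq: mean squared radial bound}) is a correct filling-in of what that invocation leaves implicit.

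The genuine gap is the uniform integrability step, and you concede as much. The paper does not prove this step either: it cites Appendix C.2 of \cite{thompson2015submanifold}, where the estimate is carried out; your sketch stops exactly where that appendix begins. Two concrete problems with the sketch. First, discarding the $dA_s$-contribution requires the Laplacian-comparison sign $\partiel{r_v}\log\Theta_v \le 0$, which holds under non-negative Ricci curvature, but that is not a hypothesis of the theorem, and it fails even for the paper's own example: the left-invariant metric $\mathrm{diag}(0.2,0.2,0.8)$ on $\SO(3)$ is not bi-invariant and its principal Ricci curvatures have mixed signs, so ``in particular on $\SO(3)$'' is not available. Second, and more fundamentally, neither the $O(r_v)$ bound on the positive part of $dA_s$ nor the a.s.\ eventual vanishing of $dL_s$ controls $\sup_{t<T}\E[\varphi_t^{1+\varepsilon}]$: first-moment bounds on the exponent do not bound exponential moments, and an almost-sure statement about a random localization time $S<T$ gives no uniform-in-$t$ control of the exponentially weighted contribution of the atypical paths that linger near $\Cut(v)$ as $t \uparrow T$, precisely where the factor $r_v^2/(T-s)$ is unbounded. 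A complete proof must supply a quantitative exponential estimate for those paths (this is the content of the cited appendix); as written, your argument establishes almost-sure convergence but not the claimed $L^1$-convergence.
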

	
	\vspace{-3mm}
	
	\begin{proof}
	Note that for each $t \in [0,T)$ we have $\E^{\Qb}[\varphi_t]<\infty$ as well as $\varphi_t \overset{}{\rightarrow} \varphi_T$ almost surely by \reflemma{result: almost sure convergence of guided bridge}. The result then follows from the uniform integrability of $\left\{\varphi_t \colon t \in [0,T)\right\}$, which can be found in Appendix C.2 in \cite{thompson2015submanifold}.
	\end{proof}

%\begin{align*}
%	(T-t)^{-\frac{d}{2}} &\int_{\phi(G)} \Phi_g(t,\phi^{-1}(x))e^{-\frac{r(\phi^{-1}(x))^2}{2(T-t)}}\sqrt{\det(J(\phi^{-1}(x)))}dx.	
%	\\	
%	=& 
%	\int_{\phi(G)} \Phi_h(t,\phi^{-1}((T-t)^{\frac{1}{2}}y))e^{-\frac{r(\phi^{-1}(y))^2}{2}}\sqrt{\det(J(\phi^{-1}((T-t)^{\frac{1}{2}}y)))}dy 
%	\\
%	\overset{t \rightarrow T}{\rightarrow}&  \Phi_h(T,v) \sqrt{\det(J(v))}\int_{\phi(G)}e^{-\frac{r(\phi^{-1}(y))^2}{2}}dy.
%\end{align*}
%The conclusion follows from Bayes formula.
%   \end{proof}

\section{Importance Sampling and Metric Estimation on $\SO(3)$}\label{sec: importance sampling and metric estimation}

%\subsection{Diffusion Bridges on $\SO(3)$}

This section takes $G$ to be the special orthogonal group of rotation matrices, $\SO(3)$, a compact connected matrix Lie group. In the context of matrix Lie groups, computing left-invariant vector fields is straightforward. 
\vspace{-8mm}

\begin{figure}[H]
    \centering
    \begin{subfigure}[t]{.3\textwidth}
        \centering
        \includegraphics[width=\linewidth]{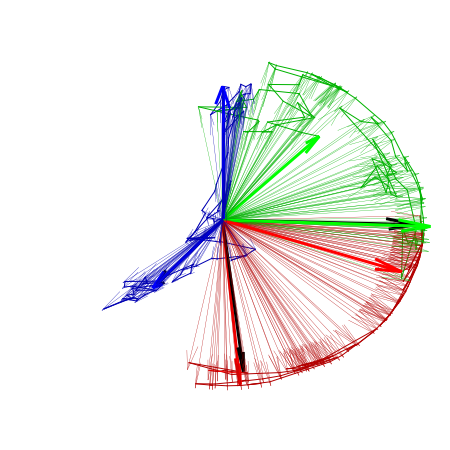}
        \caption{}
    \end{subfigure}
    \hfill
    \begin{subfigure}[t]{.3\textwidth}
        \centering
        \includegraphics[width=\linewidth]{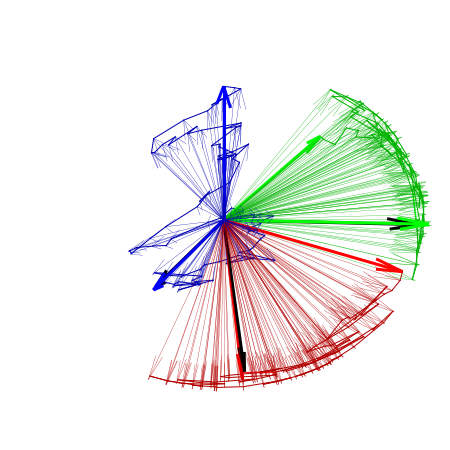}
        \caption{}
    \end{subfigure}
    \hfill
    \begin{subfigure}[t]{.3\textwidth}
        \centering
        \includegraphics[width=\linewidth]{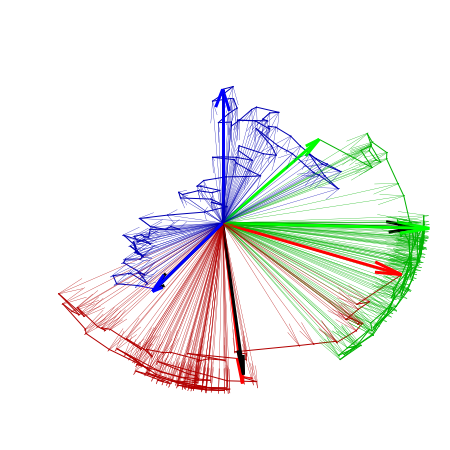}
        \caption{}
    \end{subfigure}
    \caption{Three sample paths $(a)-(c)$ of the guided diffusion process on $\SO(3)$ visualized by its action on the basis vectors $\{e_1,e_2,e_3\}$ (red, green, blue) of $\R^3$. The sample paths are conditioned to hit the rotation represented by the black vectors.}
    \label{fig: bridge samples on SO(3)}
\end{figure}

\vspace{-12mm}

\subsubsection{Numerical Simulations} 
The Euler-Heun scheme leads to approximation of the Stratonovich integral. With a time
discretization $t_1,\ldots,t_k$, $t_k-t_{k-1}=\Delta t$ and corresponding noise
$\Delta B_{t_i}\sim N(0,\Delta t)$, the numerical approximation of the Brownian motion \eqref{eq: BM SDE on Lie group} takes the form
\begin{equation}
  x_{t_{k+1}} = 
  x_{t_k}
  -
  \frac{1}{2}\sum_{j,i}C\indices{^j_{ij}}V_i(x_{t_k})\Delta t
  +
  \frac{v_{t_{k+1}}+V_i(v_{t_{k+1}}+x_{t_{k+1}})\Delta B_{t_k}^i}{2}
\label{eq: numerical brownian motion}
\end{equation}
where $v_{t_{k+1}} = V_i(x_{t_k})\Delta B_{t_k}^i$ is only used as an intermediate value in integration. Adding the logarithmic term in \eqref{eq: guided diffusion sde with radial unit vector} to \eqref{eq: numerical brownian motion} we obtain a numerical approximation of a guided diffusion \eqref{eq: guided diffusion sde}. Fig. \ref{fig: bridge samples on SO(3)} shows three different sample paths from the guided diffusion conditioned to hit the rotation represented by the black vectors.

%\subsection{Importance Sampling and Metric Estimation on $\SO(3)$}

\vspace{-3mm}

\subsubsection{Metric Estimation on $\SO(3)$}
 In the $d$-dimensional Euclidean case, importance sampling yields the estimate \cite{papaspiliopoulos2012importance}
    \begin{equation*}
        p(T,u,v) = \left(\frac{\det\left(A(T,v)\right)}{2 \pi T}\right)^{\tfrac{d}{2}} e^{-\frac{\lVert u-v\rVert^2_A}{2T}} \E[\varphi_T],
    \end{equation*}
where $\lVert x \rVert_A = x^TA(0,u)x$. Thus, from the output of the importance sampling we get an estimate of the transition density.
Similar to the Euclidean case, we obtain an expression for the heat kernel $p(T,e,v)$ as $p(T,e,v) = q(T,e)  \E\left[\varphi_T\right]$, where
	\begin{equation}\label{eq: q function in density}
	q(T,e) = \left(\frac{\det A(v)}{2\pi T}\right)^{\tfrac{3}{2}} \exp\left(-\frac{d(e,v)^2}{2T}\right) 
	= 
	\left(\frac{\det A(v)}{2\pi T}\right)^{\tfrac{3}{2}} \exp\left(-\frac{\lVert \Log_v(e)\rVert^2_A }{2T}\right),
	\end{equation}
where the equality holds almost everywhere and $A \in \Symp(\cG)$ denotes the metric $A(e)$. The $\Log_v$ map in \eqref{eq: q function in density} is the Riemannian inverse exponential map. 

Figure \ref{fig: Metric estimation on SO(3)} illustrate how importance sampling on $\SO(3)$ leads to metric estimation of the underlying true metric, from which the Brownian motion was generated. We sampled $128$ points as endpoints of a Brownian motion from the metric $\text{diag}(0.2, 0.2, 0.8)$. We used $20$ time steps and sampled $4$ bridges per observation. An iterative maximum likelihood method using gradient descent with a learning rate of $0.2$, and initial guess of the metric being $\text{diag}(1, 1, 1)$ yielded a convergence to the true metric. Note that in iteration the logarithmic map changes.

\vspace{-9mm}
\begin{figure}[H]
    \centering
    \begin{subfigure}[t]{.49\textwidth}
        \centering
        \includegraphics[width=.99\linewidth, height=145 pt]{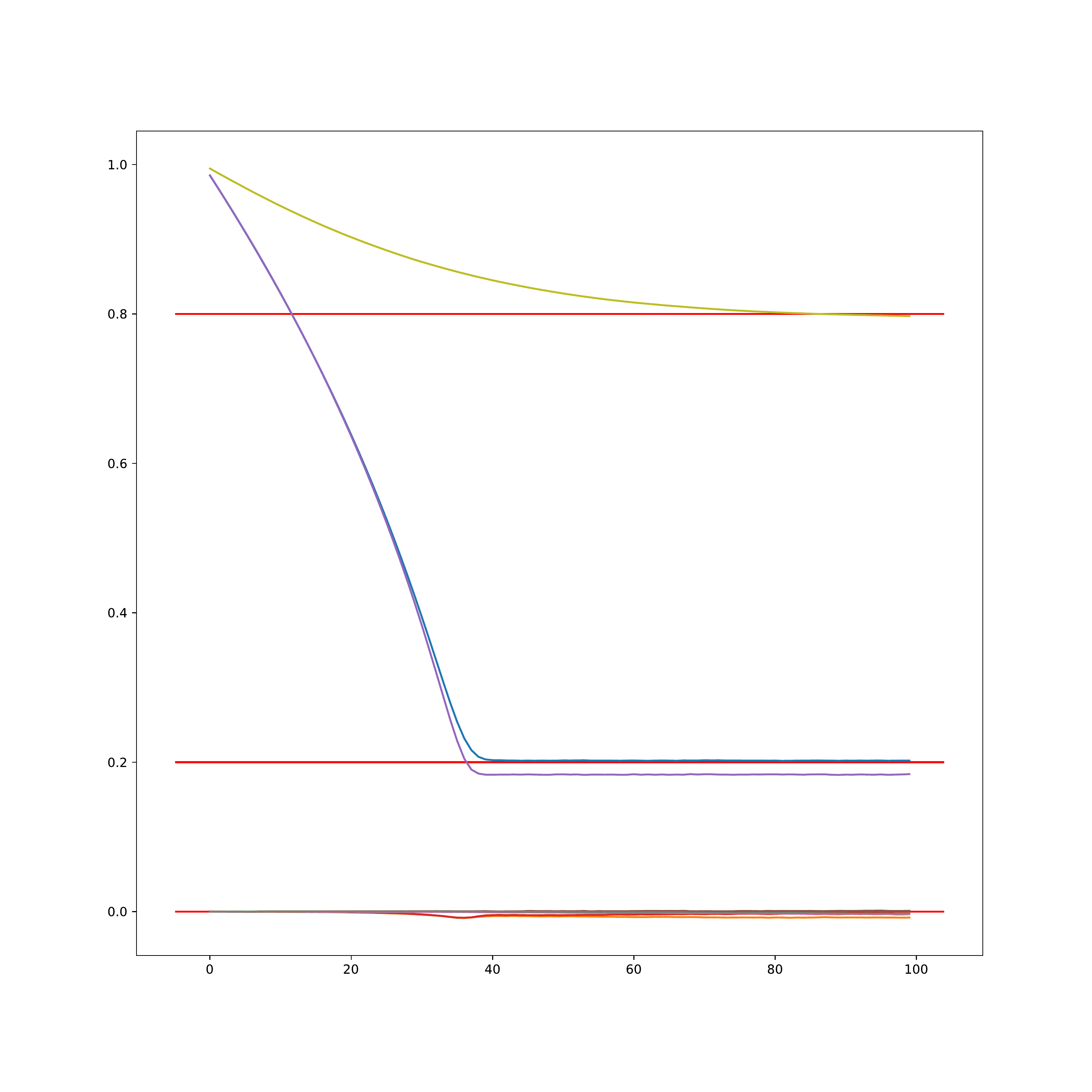}
        \caption{Estimation of the unknown underlying metric using bridge sampling. Here the true metric is the diagonal matrix $\text{diag}(0.2, 0.2, 0.8)$.}
    \end{subfigure}
    \hfill
    \begin{subfigure}[t]{.49\textwidth}
        \centering
        \includegraphics[width=.99\linewidth, height=145 pt]{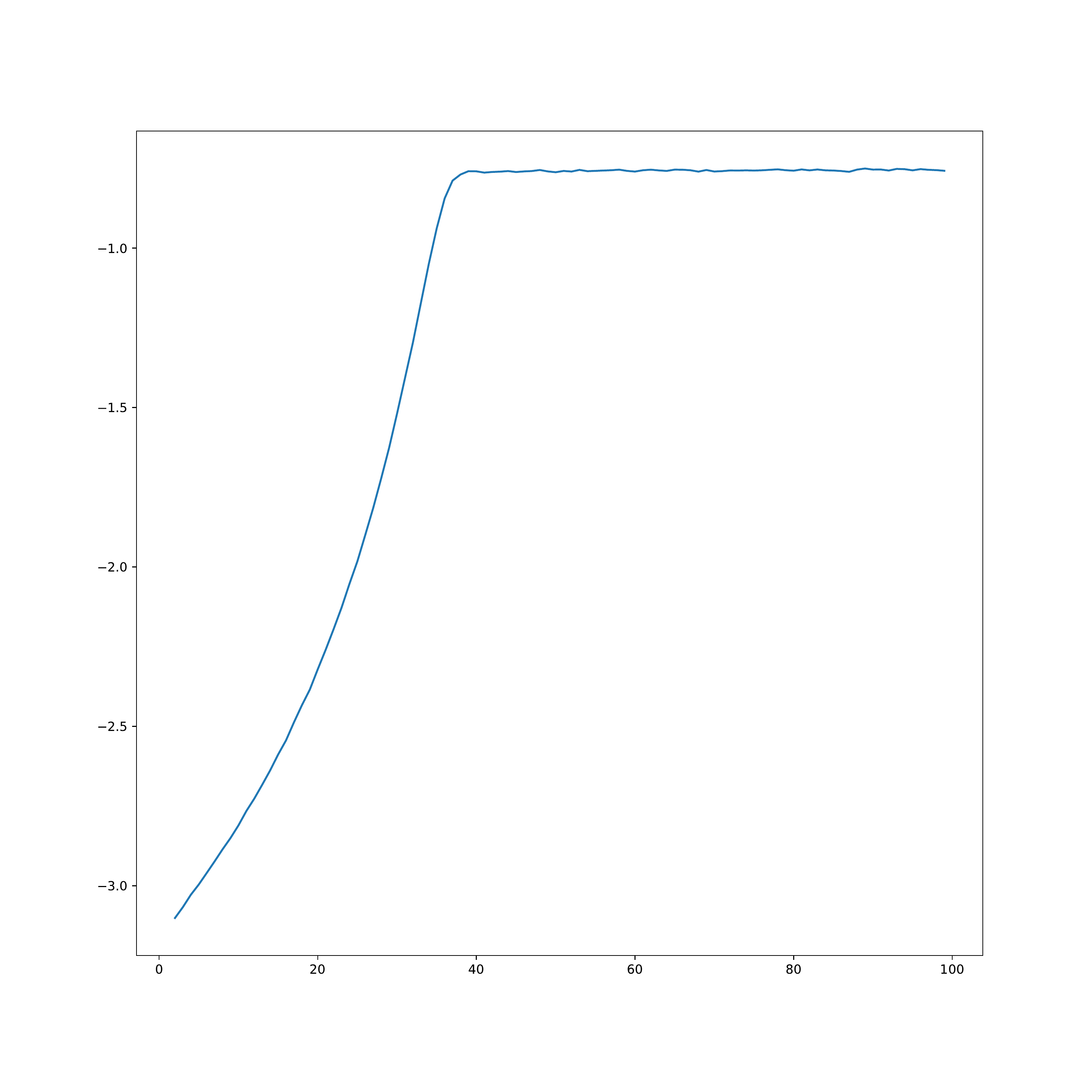}
        \caption{The iterative log-likelihood. }\label{fig: iterative likelihood}
    \end{subfigure}
    \caption{The importance sampling technique applies to metric estimation on the Lie group $\SO(3)$. Sampling a Brownian motion from an unknown underlying metric we obtain a convergence to the true underlying metric using an iterative maximum-likelihood method. Here we sampled $4$ bridge processes per observation, starting from the metric $\text{diag}(1, 1, 1)$, providing a relatively smooth iterative likelihood in \ref{fig: iterative likelihood}.}
    \label{fig: Metric estimation on SO(3)}
\end{figure}

%\subsubsection{Diffusion Mean Estimation on $S^2$}
%
%\begin{figure}[H]
%    \centering
%	\begin{subfigure}[t]{.3\textwidth}
%	    \centering
%	    \includegraphics[width=\textwidth]{}
%       \caption{Sampled data points on $\mathbb{S}^2$.}
%        \label{fig:sphere sample point}
%	\end{subfigure}
%	\hfill
%	\begin{subfigure}[t]{.3\textwidth}
%        \centering
%	    \includegraphics[width=\textwidth]{}
%        \caption{Convergence to the diffusion mean (blue point) on $\mathbb{S}^2$ from initial guess (black point).}
%        \label{fig:sphere diffusion mean convergence}
%	\end{subfigure}
%	\hfill
%	\begin{subfigure}[t]{.3\textwidth}
%       \centering
%	    \includegraphics[width=\textwidth]{}
%       \caption{The iterated likelihood.}
%        \label{fig:sphere likelihood}
%	\end{subfigure}
%	\caption{The figure illustrates a sample path from the radial bridge on $SO(3)$, by showing its left action on a basis of $\R^3$. The black arrows indicate the conditioned point.}\label{fig:SO3}
%	\end{figure}

\vspace{-14mm}

\bibliographystyle{splncs04}
\bibliography{bibfile}

\end{document}